\documentclass[12pt]{article}
\textwidth15truecm
\usepackage{graphicx}
\usepackage{amssymb,latexsym,amscd,amsmath,amsfonts,amsthm,pb-diagram,enumerate}
\usepackage[mathscr]{eucal}
\numberwithin{equation}{section}
\parskip10pt

\newtheorem{theorem}{Theorem}[section]

\newtheorem{corollary}[theorem]{Corollary}
\newtheorem{lemma}[theorem]{Lemma}

\newtheorem{remark}[theorem]{Remark}

\textwidth=15cm \textheight=22cm \topmargin=0.5cm \oddsidemargin=1cm
\evensidemargin=0.1cm \pagestyle{plain} \footskip=40 pt
\begin{document}
\begin{center} {\Large Localization at countably infinitely many\\ prime
ideals and applications}
 \footnote [1]{\noindent{\bf Key words and phrases:} Localization; Local cohomology; Associated prime ideal.\\
\indent{\bf AMS Classification 2010:} 13B30; 13D45; 13E99.\\
The second author is supported by Vietnam National Foundation for Science
and Technology Development (NAFOSTED) under grant number 101.04-2015.25 and by the research project No. DHFPT/2015/01 granted by FPT University.
}\\

\end{center}
\begin{center}
KAMAL BAHMANPOUR and PHAM HUNG QUY
\end{center}
\begin{abstract} In this paper we present a technical lemma about localization at countably infinitely many prime ideals. We apply this lemma to get many results about the finiteness of associated prime ideals of local cohomology modules.
\end{abstract}
\section{Introduction}
In this paper, let $R$ be a commutative Noetherian ring. Localization is one of the most important tools in Commutative algebra. Notice that for any multiplicative subset $S$ of $R$, the canonical extension $R \to R_S$ is flat, and many problems in Commutative algebra have good behavior under flat extensions. 
For a set of finitely many prime ideals $\{\frak p_1, ..., \frak p_k \}$ with no containment relations, set $S = R \setminus \cup_{i=1}^k \frak p_i$, we have $R_S$ is a semilocal ring and $\mathrm{Max}(R_S) = \{\frak p_1R_S, ..., \frak p_kR_S\}$. This fact follows from the well known prime avoidance lemma. This statement is false for countably infinitely many prime ideals $\{\frak p_i\}_{i \ge 1}$. For example, let $R = \mathbb{Q}[X, Y]$ and $\{\frak p_i\}_{i \in I}$ is the set of prime ideals of height one. Since $R$ is UFD we have a prime ideal of height one is principal. Moreover $R$ is a countable set, so the set $\{\frak p_i\}_{i \in I}$ is countable. On the other hand every non-constant polynomial must be contained in a prime ideal of height one. Thus $S = R \setminus \cup_{i \in I}\frak p_i = \mathbb{Q}$ and so $R_S = R$. This paper is devoted to the localization at countably infinitely many prime ideals after passing to a certain flat extension. Concretely, we prove the following result.
\begin{lemma}\label{L1.1}
  Let $R$ be a commutative Noetherian ring and $\{\frak p_i\}_{i \ge 1}$ a countable set of prime ideals of $R$ with no containment relation. Consider the formal power series ring $R[[X]]$ and set $S = R[[X]] \setminus \cup_{i \ge 1} \frak p_iR[[X]]$ and $T = R[[X]]_S$. Then $R \to T$ is a flat extension and $\mathrm{Max}(T) = \{\frak p_iT\}_{i \ge 1}$.
\end{lemma}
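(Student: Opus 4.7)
The plan has two parts. First, to see that $R \to T$ is flat, I would note that $R \to R[[X]]$ is flat (a standard fact for formal power series over a Noetherian ring, since $R[[X]]$ is the $(X)$-adic completion of the free $R$-algebra $R[X]$) and $R[[X]] \to T = R[[X]]_S$ is a localization, hence flat; the composition is then flat.

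The substantial part is identifying $\mathrm{Max}(T)$. By the usual correspondence between primes of $T$ and primes of $R[[X]]$ disjoint from $S$, the maximal ideals of $T$ correspond to primes $\frak q$ of $R[[X]]$ that are maximal among those with $\frak q \subseteq \bigcup_{i \ge 1} \frak p_i R[[X]]$. Each $\frak p_i R[[X]]$ is such a prime (it is prime because $R[[X]]/\frak p_i R[[X]] \cong (R/\frak p_i)[[X]]$ is a domain), and contraction to $R$ shows the family $\{\frak p_i R[[X]]\}$ is pairwise incomparable. So it suffices to establish the following key claim: \emph{if $\frak q$ is a prime of $R[[X]]$ with $\frak q \subseteq \bigcup_{i \ge 1} \frak p_i R[[X]]$, then $\frak q \subseteq \frak p_i R[[X]]$ for some $i$}. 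Given the claim, a maximal such $\frak q$ must equal the $\frak p_i R[[X]]$ that contains it, yielding $\mathrm{Max}(T) = \{\frak p_i T\}$.

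The hard part is the key claim; it is a countable prime avoidance statement that fails in general Noetherian rings but becomes available in $R[[X]]$ because of the $X$-adic topology. I will argue by contradiction: assume $\frak q \not\subseteq \frak p_i R[[X]]$ for every $i$, and construct an element $F \in \frak q$ avoiding every $\frak p_j R[[X]]$. Inductively build $h_1, h_2, \dots \in \frak q$ with partial sums $F_i = h_1 + \cdots + h_i$ satisfying (a) $h_i \in (X^i)$, (b) $h_i \in \frak p_j R[[X]]$ for every $j < i$, and (c) $F_i \notin \frak p_j R[[X]]$ for every $j \le i$. At step $i$, if $F_{i-1} \notin \frak p_i R[[X]]$ set $h_i = 0$; otherwise the crucial observation is that the intersection $\frak q \cap \frak p_1 R[[X]] \cap \cdots \cap \frak p_{i-1} R[[X]] \cap (X^i)$ is not contained in $\frak p_i R[[X]]$. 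Indeed, if it were, the product of these ideals would also lie in $\frak p_i R[[X]]$, and primality would force one factor to lie in $\frak p_i R[[X]]$, contradicting (respectively) the assumption $\frak q \not\subseteq \frak p_i R[[X]]$, the no-containment hypothesis on $\{\frak p_j\}$, or the fact that $X \notin \frak p_i R[[X]]$ (since $1 \notin \frak p_i$). Pick $h_i$ from the nonempty difference.

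Finally I set $F = \sum_{i \ge 1} h_i$, which converges in the $X$-adic topology by (a). Since $1 + Xg$ is a unit in $R[[X]]$ for every $g$, we have $X \in \mathrm{Jac}(R[[X]])$, so by Krull's intersection theorem both $\frak q$ and each $\frak p_j R[[X]]$ are closed in the $X$-adic topology. Hence $F \in \frak q$, and for each $j$ the tail $F - F_j = \sum_{i > j} h_i$ lies in $\frak p_j R[[X]]$ by (b); combined with (c) this gives $F \notin \frak p_j R[[X]]$ for every $j$. Thus $F \in \frak q \setminus \bigcup_j \frak p_j R[[X]]$, contradicting $\frak q \subseteq \bigcup_j \frak p_j R[[X]]$. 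The main obstacle to anticipate is precisely the inductive step: arranging a \emph{single} element $F$ that simultaneously escapes every $\frak p_j R[[X]]$, which is where the $X$-adic completeness of $R[[X]]$ — absent in the base ring $R$ — does all the work.
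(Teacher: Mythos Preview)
Your proof is correct, but it takes a different route from the paper's. The paper works in the localization $T$ rather than in $R[[X]]$: it exhibits the uncountable set $\mathcal{B} = \{\sum_{n\ge 0} b_n X^n : b_n \in \{0,1\}\}\setminus\{0\}$ and observes that the difference of any two distinct elements of $\mathcal{B}$ has the form $\pm X^k(1 + Xg)$, which is a unit in $T$ because $X \in S$. This places $T$ under condition~(ii) of the countable prime avoidance lemma (Lemma~2.1), and the conclusion $\mathrm{Max}(T)=\{\frak p_iT\}$ follows immediately.

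By contrast, you stay in $R[[X]]$ and give a direct $X$-adic limit construction of an element of $\frak q$ avoiding every $\frak p_j R[[X]]$; in effect you are reproving the complete-local case of countable prime avoidance tailored to this situation. Your argument is self-contained and makes explicit why the $X$-adic topology (closedness of ideals via Krull's intersection theorem, $X\in\mathrm{Jac}(R[[X]])$) is what makes countable avoidance work here. The paper's argument is shorter and more conceptual: it isolates the single algebraic feature of $T$ (an uncountable family with unit differences) that feeds into a black-box lemma. Both exploit the same underlying phenomenon, namely that passing to $R[[X]]$ supplies the topological/combinatorial richness missing from $R$, but they package it differently.
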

The above lemma will be proved in the next section. In Section 3 we apply Lemma \ref{L1.1} to get many results about the the finiteness of associated prime ideals of local cohomology modules. Among them, is the following:
\begin{theorem} Let $I$ be an ideal of $R$ and $M$ a finitely generated $R$-module. Then for every $i \ge 0$ the set $\{ \frak p \in \mathrm{Ass}_RH^i_I(M)\,\,:\,\, \mathrm{ht}(\frak p/I) \le 1 \} $ is finite.
  \end{theorem}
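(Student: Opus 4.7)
I would argue by contradiction, aiming to apply Lemma~\ref{L1.1}. Assume the displayed set is infinite. Since $I$ has only finitely many minimal primes, discarding these I may assume I have a countable family $\{\frak p_j\}_{j\ge 1}$ of associated primes of $H^i_I(M)$ each with $\mathrm{ht}(\frak p_j/I)=1$. For each $j$ choose a minimal prime $\frak q_j$ of $I$ contained in $\frak p_j$ with $\mathrm{ht}(\frak p_j/\frak q_j)=1$, and apply the pigeonhole principle to the finite set of minimal primes of $I$ so that, after passing to an infinite subfamily, one fixed $\frak q$ plays the role of $\frak q_j$ for every $j$. If some $\frak p_i \subsetneq \frak p_j$ occurred inside the subfamily, then $\frak q \subsetneq \frak p_i \subsetneq \frak p_j$ would give $\mathrm{ht}(\frak p_j/\frak q)\ge 2$, a contradiction; hence the subfamily is an antichain.

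Now I invoke Lemma~\ref{L1.1} to obtain a flat $R$-algebra $T=R[[X]]_S$ with $\mathrm{Max}(T)=\{\frak p_jT\}_{j\ge 1}$, and set $N:=M\otimes_R T$, a finitely generated $T$-module. Flat base change yields $H^i_{IT}(N)\cong H^i_I(M)\otimes_R T$, while the classical formula $\mathrm{Ass}_T(L\otimes_R T)=\bigcup_{\frak p\in\mathrm{Ass}_R L}\mathrm{Ass}_T(T/\frak pT)$ for flat extensions, combined with $\mathrm{Ass}_T(T/\frak p_jT)=\{\frak p_jT\}$ (since $\frak p_jT$ is a prime ideal of $T$), forces $\frak p_jT\in\mathrm{Ass}_T H^i_{IT}(N)$ for every $j$. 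Thus $H^i_{IT}(N)$ has infinitely many associated primes, all of which are maximal ideals of $T$.

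To reach a contradiction I note that every prime of $T$ lies below a maximal ideal, and $\mathrm{ht}(\frak p_jT/IT)=\mathrm{ht}(\frak p_j/I)\le 1$ (using that $R[[X]]/\frak p_jR[[X]]\cong(R/\frak p_j)[[X]]$ is a Noetherian domain and $T$ is a localization of $R[[X]]$), so $\dim T/IT\le 1$ and in particular $\dim N/ITN\le 1$. Now the known finiteness theorem -- for a finitely generated module $P$ over a Noetherian ring $A$ and an ideal $J$ with $\dim P/JP\le 1$, the set $\mathrm{Ass}_A H^i_J(P)$ is finite -- applied to $(A,J,P)=(T,IT,N)$ produces the desired contradiction.

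The main obstacle is to have this one-dimensional finiteness result available in the setting of the non-local ring $T$; one may either cite it directly from the literature or reprove it by a local-to-global argument using that each $T_{\frak p_jT}$ is local with $\dim T_{\frak p_jT}/IT_{\frak p_jT}\le 1$. The pigeonhole extraction of the antichain and the height bookkeeping on the flat tower $R\to R[[X]]\to T$ are then routine.
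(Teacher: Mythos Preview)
Your argument is correct and follows essentially the same route as the paper: pass to the flat extension $T$ of Lemma~\ref{L1.1}, use flat base change for local cohomology and the behavior of associated primes, observe $\dim T/IT\le 1$, and then invoke the one--dimensional finiteness/cofiniteness result (the paper cites it as Lemma~\ref{L3.1}, the Bahmanpour--Naghipour cofiniteness theorem, which immediately gives finiteness of $\mathrm{Ass}$). Your pigeonhole reduction to a fixed minimal prime $\frak q$ is harmless but unnecessary, since any two primes of height exactly $1$ over $I$ are already incomparable.
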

Recall that, for any ideal $I$ of $R$ and any $R$-module $M$,
the $i^{th}$ local cohomology module of $M$ with respect to $I$ is
defined as$$H^i_I(M) = \underset{n\geq1} {\varinjlim}\,\,
\text{Ext}^i_R(R/I^n, M).$$We refer the reader to \cite{BS98} or
\cite{G66} for more details about local cohomology.

\section{Localization at countably infinitely many prime
ideals}
 We start this section with the well known result, countable prime avoidance lemma (see \cite[Lemma 13.2]{LW12}).
\begin{lemma}\label{L2.1}
Let $A$ be a Noetherian ring satisfying either of these conditions:
\begin{enumerate}[{(i)}]\rm
\item {\it $A$ is a complete local ring.}
\item {\it There is an uncountable set of elements $\{ \mu _{\lambda}\}_{\lambda} \in \Lambda$ such that $\mu_{\lambda} - \mu_{\gamma}$ is a unit of $A$ for every $\lambda \neq \gamma$.}
\end{enumerate}
Let $\{\frak p_i\}_{i \ge 1}$ a countable set of prime ideals of $A$ and $I$ an ideal such that $I \subseteq \cup_{i \ge 1} \frak p_i$. Then $I \subseteq \frak p_i$ for some $i$.
\end{lemma}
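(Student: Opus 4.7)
Assume for contradiction that $I\not\subseteq\frak p_i$ for every $i$; the goal is to produce an element of $I$ that lies in no $\frak p_i$. Observe first that $I$ must be a proper ideal, since otherwise $1\in\frak p_i$ for some $i$, impossible. The two hypotheses (i) and (ii) require genuinely different constructions, and I would handle them separately.

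\medskip

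\emph{Case (ii).} Using Noetherianity, write $I=(a_1,\dots,a_n)$ and set
\[
y_\lambda \;=\; \sum_{k=1}^{n}\mu_\lambda^{\,k-1}\,a_k\in I,\qquad \lambda\in\Lambda.
\]
If for some fixed $i$ there were pairwise distinct $\lambda_1,\dots,\lambda_n$ with $y_{\lambda_1},\dots,y_{\lambda_n}\in\frak p_i$, then the $n\times n$ Vandermonde matrix $V=(\mu_{\lambda_j}^{k-1})$ would satisfy $V(a_1,\dots,a_n)^T\equiv 0\pmod{\frak p_i}$; its determinant $\prod_{j<l}(\mu_{\lambda_l}-\mu_{\lambda_j})$ is a unit of $A$ by hypothesis, so $V$ is invertible over $A$, forcing every $a_k\in\frak p_i$ and hence $I\subseteq\frak p_i$---a contradiction. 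Thus at most $n-1$ values of $\lambda$ place $y_\lambda$ in any single $\frak p_i$, so at most $\aleph_0\cdot(n-1)=\aleph_0$ values of $\lambda$ put $y_\lambda$ into $\bigcup_i\frak p_i$. Since $|\Lambda|>\aleph_0$, some $y_\lambda\in I\setminus\bigcup_i\frak p_i$---contradiction.

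\medskip

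\emph{Case (i).} Let $\frak m$ be the maximal ideal of $A$. Since $I\not\subseteq\frak p_j$ and $I\subseteq\frak m$, each $\frak p_j\subsetneq\frak m$; by Krull's intersection theorem each $\frak p_j$ is closed in the $\frak m$-adic topology, i.e.\ $\bigcap_N(\frak p_j+\frak m^N)=\frak p_j$. The plan is to construct inductively $x_k\in I$ and integers $N_1<N_2<\cdots\to\infty$ with (a)~$x_{k+1}-x_k\in\frak m^{N_k}$, and (b)~$x_k\notin\frak p_j+\frak m^{N_k}$ for all $j\le k$. In the inductive step, if $x_k\notin\frak p_{k+1}$ one simply takes $x_{k+1}=x_k$; otherwise set $x_{k+1}=x_k+z$ for some $z\in I\cap\frak m^{N_k}\setminus\frak p_{k+1}$, which exists because for any $a\in I\setminus\frak p_{k+1}$ one has $a\frak m^{N_k}\subseteq I\cap\frak m^{N_k}$ with $a\frak m^{N_k}\not\subseteq\frak p_{k+1}$---the latter would give $\frak m^{N_k}\subseteq\frak p_{k+1}$ hence $\frak m=\frak p_{k+1}$, contrary to $\frak p_{k+1}\subsetneq\frak m$. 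The new $N_{k+1}>N_k$ is chosen via closedness of each $\frak p_j$ for $j\le k+1$; (b) at stage $k+1$ for $j\le k$ propagates because $x_{k+1}-x_k\in\frak m^{N_k}$ and $\frak m^{N_{k+1}}\subseteq\frak m^{N_k}$. Completeness gives $x=\lim x_k\in A$; $I$ is $\frak m$-adically closed (Krull on $A/I$), so $x\in I$; and for each $j$, closedness of $\frak m^{N_j}$ forces $x-x_j\in\frak m^{N_j}$, so $x\in x_j+\frak m^{N_j}$, which avoids $\frak p_j+\frak m^{N_j}$ by (b). In particular $x\notin\frak p_j$ for every $j$, contradicting $I\subseteq\bigcup_j\frak p_j$.

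\medskip

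The principal obstacle is the bookkeeping in case (i): the naive condition $x_k\notin\frak p_j$ is not preserved under $\frak m$-adic limits, so one must strengthen it to the open condition $x_k\notin\frak p_j+\frak m^{N_k}$, and the closedness of each prime (via Krull's intersection theorem) is precisely what provides the requisite witnesses $N_k$. In case (ii), the substantive content is recognizing that it is the Vandermonde determinant---not individual differences $\mu_\lambda-\mu_\gamma$---that controls when many of the twisted combinations $y_\lambda$ can simultaneously lie in one prime.
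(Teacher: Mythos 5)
Your proof is correct in both cases, and it is worth noting that the paper itself gives no argument for this lemma: it is quoted as a known result with a citation to \cite[Lemma 13.2]{LW12}, so there is no internal proof to compare against. Your case (ii) is the standard counting argument, and the Vandermonde determinant $\prod_{j<l}(\mu_{\lambda_l}-\mu_{\lambda_j})$ being a product of units is exactly the right way to see that a single prime $\frak p_i$ not containing $I=(a_1,\dots,a_n)$ can absorb at most $n-1$ of the elements $y_\lambda$; countably many primes then miss some $y_\lambda\in I$, a contradiction. Your case (i) is the classical successive-approximation argument for complete local rings: the key points are all present and correct --- each $\frak p_j\subsetneq\frak m$ under the contradiction hypothesis, the witness $z\in I\cap\frak m^{N_k}\setminus\frak p_{k+1}$ obtained from $a\frak m^{N_k}$, the strengthening of ``$x_k\notin\frak p_j$'' to the $\frak m$-adically open condition $x_k\notin\frak p_j+\frak m^{N_k}$ (which, as you say, is what survives passage to the limit), and the closedness of $I$, $\frak m^{N_j}$ and $\frak p_j$ via Krull's intersection theorem. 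The only cosmetic omission is that the base step ($x_1\in I\setminus\frak p_1$ and the choice of $N_1$ by closedness of $\frak p_1$) is left implicit, but it is an immediate instance of your inductive mechanism. In short, the proposal is a complete, self-contained proof of the cited lemma, along essentially the same lines as the standard source.
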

The following technical lemma is the main result of this section.
\begin{lemma}\label{L2.2}
Let $R$ be a commutative Noetherian ring and $\{\frak p_i\}_{i \ge 1}$ a countable set of prime ideals of $R$ with no containment relation. Consider the formal power series ring $R[[X]]$ and set $S = R[[X]] \setminus \cup_{i \ge 1} \frak p_iR[[X]]$ and $T = R[[X]]_S$. Then $R \to T$ is a flat extension and $\mathrm{Max}(T) = \{\frak p_iT\}_{i \ge 1}$.
\end{lemma}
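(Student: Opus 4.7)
My plan is to isolate the content of the lemma in one prime-avoidance claim for the primes $\mathfrak p_i R[[X]]$ inside $R[[X]]$, and to derive everything else formally.

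The easy preliminaries come first. Each $\mathfrak p_i R[[X]]$ is prime since $R[[X]]/\mathfrak p_i R[[X]]\cong (R/\mathfrak p_i)[[X]]$ is a domain, and concretely it is the set of power series whose coefficients all lie in $\mathfrak p_i$; contracting back to $R$ shows $\{\mathfrak p_i R[[X]]\}_{i\ge 1}$ inherits the no-containment assumption, so $S$ is multiplicatively closed. Flatness of $R\to T$ is the composition of $R\hookrightarrow R[[X]]$ with the localization $R[[X]]\to R[[X]]_S$. A crucial observation for the rest: every maximal ideal of $R[[X]]$ contains $X$ (any power series with unit constant term is a unit, so a maximal ideal $\mathfrak m$ missing $X$ would force $1-Xg\in\mathfrak m$ with $1-Xg$ a unit), whence $X$ lies in the Jacobson radical of $R[[X]]$, and Krull's intersection theorem then forces every ideal of $R[[X]]$ to be $X$-adically closed.

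The crux is the claim: any ideal $\mathfrak a\subseteq R[[X]]$ satisfying $\mathfrak a\subseteq\bigcup_{i\ge 1}\mathfrak p_iR[[X]]$ is already contained in some single $\mathfrak p_{i_0}R[[X]]$. Because $R[[X]]$ need not be complete local nor carry an uncountable set of pairwise-differing units, Lemma \ref{L2.1} does not apply directly; I instead exploit $X$-adic completeness. Suppose the claim fails and pick $f_i\in\mathfrak a\setminus\mathfrak p_iR[[X]]$, so $f_i$ has some coefficient at a degree $d_i$ outside $\mathfrak p_i$. Construct $g_k\in\mathfrak a$ and integers $M_k$ recursively (starting from $g_0=0$) so that for each $j\le k$ the series $g_k$ has a coefficient at some degree $\le M_k$ outside $\mathfrak p_j$: if $g_{k-1}\notin\mathfrak p_kR[[X]]$ already, keep $g_k=g_{k-1}$; otherwise set $g_k=g_{k-1}+X^{N_k}f_k$ with $N_k$ chosen far larger than $M_{k-1}$. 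Then the coefficients of $g_k$ at degrees $\le M_{k-1}$ are untouched, so all earlier witnesses persist, while the $X^{N_k+d_k}$-coefficient of $g_k$ is (the old coefficient of $g_{k-1}$ at that degree, which lies in $\mathfrak p_k$ because $g_{k-1}\in\mathfrak p_kR[[X]]$) plus (the $d_k$-coefficient of $f_k$, outside $\mathfrak p_k$), hence outside $\mathfrak p_k$. Letting $N_k\to\infty$, the sequence is $X$-adically Cauchy; its limit $g$ lies in $\mathfrak a$ by $X$-adic closedness and escapes every $\mathfrak p_jR[[X]]$ because each of its witness coefficients is preserved---contradicting $g\in\mathfrak a\subseteq\bigcup_j\mathfrak p_jR[[X]]$.

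The structure of $\mathrm{Max}(T)$ now follows. Primes of $T$ correspond to primes of $R[[X]]$ disjoint from $S$, i.e.\ contained in $\bigcup_i\mathfrak p_iR[[X]]$, and the claim places each such prime inside some $\mathfrak p_{i_0}R[[X]]$; each $\mathfrak p_iT$ is in turn maximal because any strictly larger prime of $R[[X]]$ disjoint from $S$ would---by the claim again---sit inside some $\mathfrak p_jR[[X]]$, forcing $\mathfrak p_i\subsetneq\mathfrak p_j$ after contracting to $R$, contradicting the no-containment hypothesis. The main obstacle is the recursive $X$-adic bookkeeping: placing each new witness at a sufficiently high degree that all earlier witnesses survive, and leveraging $g_{k-1}\in\mathfrak p_kR[[X]]$ to guarantee that the interfering old coefficient lies in $\mathfrak p_k$ so the sum at the new witness degree actually escapes $\mathfrak p_k$. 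Everything else is formal.
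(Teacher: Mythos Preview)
Your proof is correct, and it takes a genuinely different route from the paper's. The paper works \emph{after} localizing: it observes that the uncountable family of $0$--$1$ power series in $R[[X]]$ has pairwise differences of the form $X^k\cdot(\text{unit of }R[[X]])$, notes that $X\in S$ (so $X$ is a unit in $T$), and concludes that $T$ satisfies condition~(ii) of Lemma~\ref{L2.1}; countable prime avoidance in $T$ then gives $\mathrm{Max}(T)=\{\mathfrak p_iT\}$ immediately. You instead stay in $R[[X]]$ and prove the avoidance statement for the primes $\mathfrak p_iR[[X]]$ directly, by building an $X$-adically convergent sequence in $\mathfrak a$ whose limit escapes every $\mathfrak p_jR[[X]]$, using Krull's intersection theorem (via $X$ lying in the Jacobson radical) to guarantee the limit remains in $\mathfrak a$. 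The paper's argument is shorter and more modular, outsourcing the combinatorics to the known Lemma~\ref{L2.1}; your argument is self-contained and in effect re-proves the ``complete local'' case of countable prime avoidance in this specific $X$-adic setting. Both reach the same endpoint, and your deduction of $\mathrm{Max}(T)$ from the avoidance claim is the same formal step the paper uses.
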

\begin{proof} It is clear that $R \to T$ is flat and $\frak p_iT \in \mathrm{Spec}(T)$ for all $i \ge 1$. We prove that $T$ satisfies the condition (ii) of Lemma \ref{L2.1}. We consider the following subset of elements in $R[[X]]$
  $$\mathcal{B} := \{ \mu_{\lambda} = b_0 + b_1X + \cdots + b_nX^n + \cdots \,\,:\,\, b_i = 0 \text{ or } 1 \text{ and } \mu_{\lambda} \neq 0\}.$$
Since $R[[X]]$ is a subring of $T$, so $\mathcal{B} \subseteq T$. It is clear that $\mathcal{B}$ is an uncountable set. For every $\mu_{\lambda} \neq \mu_{\gamma}$ pair of distinct elements of $\mathcal{B}$ we have
  $$\mu_{\lambda} - \mu_{\gamma} = a_0 + a_1X + \cdots + a_nX^n + \cdots$$
  with $a_i = 0, 1$ or $-1$ and at least one $a_i \neq 0$. Let $k$ be the least integer such that $a_k \neq 0$. We have
 $$\mu_{\lambda} - \mu_{\gamma} = X^k(1 + a_{k+1}X + \cdots )$$
or
 $$\mu_{\lambda} - \mu_{\gamma} = X^k(-1 + a_{k+1}X + \cdots ).$$
We have both $1 + a_{k+1}X + \cdots$ and $-1 + a_{k+1}X + \cdots$ are units in $R[[X]]$ and so are in $T$. Since $X \notin \frak p_iT$ for all $i \ge 1$ we have $X \in S$. Thus $X$ is a unit in $T$. Therefore $\mu_{\lambda} - \mu_{\gamma}$ is a unit in $T$ for every $\mu_{\lambda} \neq \mu_{\gamma}$. Hence $T$ satisfies the countable prime avoidance lemma. Since $\cup_{i \ge 1}\frak p_iT$ is the set of non-units of $T$, we have $I \subseteq \cup_{i \ge 1}\frak p_iT$ for every proper ideal $I$ of $T$. By the countable prime avoidance lemma we have $I \subseteq \frak p_iT$ for some $i$. Therefore $\mathrm{Max}(T) = \{\frak p_iT \}_{i \ge 1}$. The proof is complete.
\end{proof}
\section{Applications}
In this section, let $I$ be an ideal of $R$ and $M$ a finitely generated $R$-module. In general the $i^{\mathrm{th}}$ local cohomology module $H_I^i(M)$ is not finitely generated. Grothendieck asked the following question: Is $\mathrm{Hom}(R/I, H_I^i(M))$ finitely generated for all $i \ge 0$? The first counterexample was given by Hartshorne in \cite{Ha70}. In this paper he introduced the notion of $I$-cofinite modules. An $R$-module $L$ is called $I$-cofinite if $\mathrm{Supp}(L) \subseteq V(I)$ and $\mathrm{Ext}^i_R(R/I, L)$ is finitely generated for all $i \ge 0$. Hartshorne proved that $H_I^i(M)$ is $I$-cofinite for all $i\ge 0$ if $R$ is a complete regular local ring and $I$ is a prime ideal of dimension one. Hartshorne's result was extended by many authors (see, \cite{BN09}, \cite{DM97}, \cite{HK91}, \cite{Y97}). In \cite[Theorem 1.1]{BN09} Bahmanpour and Naghipour proved the following result (see also \cite[Theorem 2.10]{Me12}).
\begin{lemma}\label{L3.1} Let $I$ be an ideal of $R$ of dimension one and $M$ a finitely generated $R$-module. Then $H_I^i(M)$ is $I$-cofinite for all $i \ge 0$.
\end{lemma}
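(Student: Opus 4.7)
The plan is to invoke Melkersson's characterization of $I$-cofiniteness in dimension one (see \cite[Theorem 2.10]{Me12}): since $\mathrm{Supp}(H^i_I(M)) \subseteq V(I)$ and $\dim R/I \le 1$, the module $H^i_I(M)$ is $I$-cofinite if and only if $\mathrm{Hom}_R(R/I, H^i_I(M))$ and $\mathrm{Ext}^1_R(R/I, H^i_I(M))$ are both finitely generated. A second fact I would use repeatedly is that under the assumption $\dim R/I \le 1$, the class of $I$-cofinite modules with support in $V(I)$ is a Serre subcategory of the category of $R$-modules, hence closed under submodules, quotients, and extensions.

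I would proceed by induction on $i$. The base case $i = 0$ is immediate, since $H^0_I(M) = \Gamma_I(M)$ is a submodule of the finitely generated module $M$. For the inductive step, replace $M$ by $M/\Gamma_I(M)$, which leaves $H^j_I(M)$ unchanged for $j \ge 1$ and guarantees an $M$-regular element $x \in I$. From the short exact sequence $0 \to M \xrightarrow{\,x\,} M \to M/xM \to 0$, the long exact sequence of local cohomology yields
\[
H^{i-1}_I(M/xM) \longrightarrow H^i_I(M) \xrightarrow{\,x\,} H^i_I(M) \longrightarrow H^i_I(M/xM).
\]
By the inductive hypothesis applied to the finitely generated module $M/xM$, $H^{i-1}_I(M/xM)$ is $I$-cofinite; hence its quotient $(0:_{H^i_I(M)}x)$ is also $I$-cofinite by the Serre property. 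In particular, $\mathrm{Hom}_R(R/I, H^i_I(M)) = (0:_{H^i_I(M)}I) \subseteq (0:_{H^i_I(M)}x)$ is finitely generated.

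The main obstacle is the finiteness of $\mathrm{Ext}^1_R(R/I, H^i_I(M))$. My approach is to split the four-term sequence above into the two short exact sequences $0 \to (0:_{H^i_I(M)}x) \to H^i_I(M) \to xH^i_I(M) \to 0$ and $0 \to xH^i_I(M) \to H^i_I(M) \to H^i_I(M)/xH^i_I(M) \to 0$, apply $\mathrm{Hom}_R(R/I,-)$ to each, and exploit the embedding $H^i_I(M)/xH^i_I(M) \hookrightarrow H^i_I(M/xM)$ read off from the same sequence. The delicate point is that the inductive hypothesis does not directly yield cofiniteness of $H^i_I(M/xM)$, so its submodule $H^i_I(M)/xH^i_I(M)$ cannot be declared $I$-cofinite immediately. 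The remedy is to iterate the construction with a sequence of elements of $I$ whose radicals cumulatively generate $\sqrt{I}$ (chosen via prime avoidance), and to leverage that $\dim R/I \le 1$ caps the Ext-degrees entering the diagram chase, so that only finitely many iterations are required before the Serre subcategory property closes the argument.
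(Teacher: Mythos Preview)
The paper does not prove this lemma; it simply quotes it from the literature, citing \cite[Theorem 1.1]{BN09} as the source (with \cite[Theorem 2.10]{Me12} as an alternative reference). So there is no argument in the paper to compare against, and your attempt goes well beyond what the paper does.

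That said, your sketch has both a circularity and an unfilled gap. The circularity: you invoke as a black box that for $\dim R/I\le 1$ the $I$-cofinite modules form a Serre subcategory. In general $I$-cofinite modules are \emph{not} closed under submodules or quotients; that they are when $\dim R/I\le 1$ is exactly the companion theorem in \cite{Me12}, proved together with (and not logically prior to) the cofiniteness of $H^i_I(M)$. Using it here is essentially citing the result you are asked to prove. The gap: even granting the Serre property, your induction on $i$ only delivers cofiniteness of $H^{i-1}_I(M/xM)$, not of $H^i_I(M/xM)$, so the embedding $H^i_I(M)/xH^i_I(M)\hookrightarrow H^i_I(M/xM)$ gives you no control over $\mathrm{Ext}^1_R(R/I,H^i_I(M))$. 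Your proposed remedy---iterating with further elements $x_1,\dots,x_n$ whose radical is $\sqrt{I}$---requires each $x_k$ to be regular on the successive quotient, i.e.\ an $M$-regular sequence of that length inside $I$, which need not exist; and without regularity the key exact sequences collapse. The actual proofs in \cite{BN09} and \cite{Me12} organize the induction differently (or work through the characterization theorem first) precisely to avoid this trap; as written, your inductive step does not close.
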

Now, we are ready to state and prove the first main result of this section, which is an
application of Lemma \ref{L2.2}.
\begin{theorem}\label{T3.2}
Let $R$ be a Noetherian ring, $I$ an ideal of $R$ and $M$ a finitely generated $R$-module.  Then for every $i \geq 0$ and any $j\geq 0$,  the set   $$\{\frak p \in \mathrm{Ass}_R\mathrm{Ext}^j_R(R/I,H^i_I(M))\,\,:\,\, \mathrm{ht}(\frak p/I)\leq 1\}$$ is finite.
\end{theorem}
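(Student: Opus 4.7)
The plan is to argue by contradiction, using Lemma \ref{L2.2} to turn the hypothetical infinite family of associated primes into infinitely many maximal ideals in a flat extension where Lemma \ref{L3.1} applies.

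Write $N := \mathrm{Ext}^j_R(R/I, H^i_I(M))$. Since $I$ annihilates $N$, every $\frak p \in \mathrm{Ass}_R N$ contains $I$. If the set in the statement were infinite, then after discarding the finitely many minimal primes of $I$ we could extract a countable subfamily $\{\frak p_k\}_{k \geq 1} \subseteq \mathrm{Ass}_R N$ with $\mathrm{ht}(\frak p_k/I) = 1$ for every $k$; such primes automatically form an antichain. Apply Lemma \ref{L2.2} to this family to obtain a flat $R$-algebra $T = R[[X]]_S$ with $\mathrm{Max}(T) = \{\frak p_k T\}_{k \geq 1}$.

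The next step is to verify that $\dim(T/IT) \leq 1$. Using the isomorphism $R[[X]]/IR[[X]] \cong (R/I)[[X]]$, the prime $\frak p_k R[[X]]/IR[[X]]$ corresponds to the extension of $\frak p_k/I$ along the flat map $R/I \hookrightarrow (R/I)[[X]]$, and the standard formula $\mathrm{ht}(\frak q A[[X]]) = \mathrm{ht}(\frak q)$ for Noetherian $A$ yields $\mathrm{ht}(\frak p_k R[[X]]/IR[[X]]) = \mathrm{ht}(\frak p_k/I) \leq 1$. Since localization preserves heights and every maximal ideal of $T/IT$ has the form $\frak p_k T/IT$, this gives $\dim(T/IT) \leq 1$. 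Lemma \ref{L3.1} then applies to the ideal $IT$ and the finitely generated $T$-module $M \otimes_R T$, so $H^i_{IT}(M \otimes_R T)$ is $IT$-cofinite; in particular $\mathrm{Ext}^j_T(T/IT, H^i_{IT}(M \otimes_R T))$ is finitely generated over $T$ and thus has only finitely many associated primes.

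To close the argument I would invoke flat base change, which gives $H^i_{IT}(M \otimes_R T) \cong H^i_I(M) \otimes_R T$ and therefore $\mathrm{Ext}^j_T(T/IT, H^i_{IT}(M \otimes_R T)) \cong N \otimes_R T$; hence $N \otimes_R T$ has only finitely many associated primes over $T$. On the other hand, for each $k$ the embedding $R/\frak p_k \hookrightarrow N$ witnessing $\frak p_k \in \mathrm{Ass}_R N$ tensors by flatness to $T/\frak p_k T \hookrightarrow N \otimes_R T$, and since $\frak p_k T$ is maximal this gives $\frak p_k T \in \mathrm{Ass}_T(N \otimes_R T)$. These are infinitely many distinct primes, a contradiction. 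The main technical input beyond Lemmas \ref{L2.2} and \ref{L3.1} is the height computation giving $\dim(T/IT) \leq 1$; everything else (flat base change for $H^i_I$ and for $\mathrm{Ext}$, and the passage from an associated prime to an embedding) is standard bookkeeping.
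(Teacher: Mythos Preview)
Your argument is correct and follows essentially the same route as the paper: assume the set is infinite, extract a countable family of height-one primes, pass to the flat extension $T$ of Lemma \ref{L2.2}, use flat base change to identify $\mathrm{Ext}^j_R(R/I,H^i_I(M))\otimes_R T$ with $\mathrm{Ext}^j_T(T/IT,H^i_{IT}(M\otimes_R T))$, and then invoke Lemma \ref{L3.1} to force this module to be finitely generated, contradicting the infinitude of the $\frak p_kT$. The only differences are cosmetic: you spell out why the $\frak p_k$ form an antichain and why $\dim(T/IT)\le 1$, and you transfer associated primes by tensoring the embedding $R/\frak p_k\hookrightarrow N$ rather than citing \cite[Theorem 23.2]{M86}; the paper simply asserts the first two points and quotes Matsumura for the third.
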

\begin{proof} Suppose there are $i$ and $j$ such that the set
$$\{ \frak p \in \mathrm{Ass}_R\mathrm{Ext}^j_R(R/I,H^i_I(M)) \,\,:\,\, \mathrm{ht}(\frak p/I) \le 1 \} $$ is not finite. We can choose an countable set $\{ \frak p_k\}_{k \ge 1} \subseteq \mathrm{Ass}_R\mathrm{Ext}^j_R(R/I,H^i_I(M))$ and $\mathrm{ht}(\frak p_k/I) = 1$ for all $k \ge 1$. Let $T$ as Lemma \ref{L2.2}, we have $R \to T$ is a flat extension and
$$\mathrm{Max}(T) = \{\frak p_kT \}_{k \ge 1}.$$
By the flat base change theorem (see, \cite[Theorem 4.3.2]{BS98}) we have
$$\mathrm{Ext}^j_R(R/I,H^i_I(M)) \otimes_R T \cong \mathrm{Ext}^j_T(R/I \otimes_R T,H^i_I(M) \otimes_R T) \cong \mathrm{Ext}^j_T(T/IT,H^i_{IT}(M \otimes_R T)).$$
So $\frak p_kT \in \mathrm{Ass}_{T} \mathrm{Ext}^j_T(T/IT,H^i_{IT}(M \otimes_R T))$ for all $k \ge 1$ by \cite[Theorem 23.2]{M86}. On the other hand we have $\dim T/IT = 1$ so $H^i_{IT}(M \otimes_R T)$ is $IT$-cofinite by Lemma \ref{L3.1}. Thus  the $T$-module $\mathrm{Ext}^j_T(T/IT,H^i_{IT}(M \otimes_R T))$ is finitely generated and so the set $$\mathrm{Ass}_{T} \mathrm{Ext}^j_T(T/IT,H^i_{IT}(M \otimes_R T))$$ is finite, which is  a contradiction. The proof is complete.
\end{proof}
      Recall that $\mathrm{Ass}_RH_I^i(M) = \mathrm{Ass}_R\mathrm{Hom}(R/I,H_I^i(M))$ for all $i \ge 0$. So the following result is an immediately
consequence of Theorem \ref{T3.2}.
\begin{corollary} Let $I$ be an ideal of $R$ and $M$ a finitely generated $R$-module. Then for every $i \ge 0$ the set $\{ \frak p \in \mathrm{Ass}_RH^i_I(M) \,\,:\,\, \mathrm{ht}(\frak p/I) \le 1 \} $ is finite.
  \end{corollary}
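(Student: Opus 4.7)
The plan is to derive this corollary directly from Theorem \ref{T3.2} by specializing to $j = 0$. The bridge is the standard identity
$$\mathrm{Ass}_R N = \mathrm{Ass}_R \mathrm{Hom}_R(R/I, N)$$
valid for any $R$-module $N$ whose support lies in $V(I)$. One direction, $\mathrm{Ass}_R \mathrm{Hom}_R(R/I, N) \subseteq \mathrm{Ass}_R N$, is formal since $\mathrm{Hom}_R(R/I, N)$ is a submodule of $N$. For the reverse inclusion, any $\frak p \in \mathrm{Ass}_R N$ automatically contains $I$ (because $\mathrm{Supp}(N) \subseteq V(I)$), so an embedding $R/\frak p \hookrightarrow N$ is annihilated by $I$ and hence factors through the $I$-torsion submodule $\mathrm{Hom}_R(R/I, N)$. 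Since $H^i_I(M)$ is $I$-torsion, this identity applies in our situation.

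Applying Theorem \ref{T3.2} with $j = 0$ now yields the finiteness of
$$\{\frak p \in \mathrm{Ass}_R \mathrm{Hom}_R(R/I, H^i_I(M)) \,:\, \mathrm{ht}(\frak p/I) \le 1\},$$
and the identity above identifies this set with $\{\frak p \in \mathrm{Ass}_R H^i_I(M) : \mathrm{ht}(\frak p/I) \le 1\}$, which is therefore finite as well.

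There is essentially no obstacle in this argument: all the substantive work, namely the use of Lemma \ref{L2.2} to pass to the flat extension $T$ in which $\dim T/IT = 1$ and the appeal to the cofiniteness result Lemma \ref{L3.1}, has already been carried out inside Theorem \ref{T3.2}. The only conceptual input specific to this corollary is the reduction of associated primes of an $I$-torsion module to those of $\mathrm{Hom}_R(R/I, -)$, which is exactly the observation recalled just above the statement.
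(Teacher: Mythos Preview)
Your proposal is correct and matches the paper's own argument exactly: the paper records the identity $\mathrm{Ass}_R H^i_I(M) = \mathrm{Ass}_R \mathrm{Hom}_R(R/I, H^i_I(M))$ just before the corollary and then deduces it as an immediate consequence of Theorem~\ref{T3.2} with $j=0$. Your justification of that identity via the $I$-torsion property is the standard one and is all that is needed.
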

The following results are other applications of Lemma \ref{L2.2} to local cohomology
modules.
\begin{corollary}
Let $R$ be a Noetherian ring, $I$ an ideal of $R$ and $n\geq 1$ be an integer and $M$ be a finitely generated $R$-module such that $\dim(M/IM)=n$.  Then for any finitely generated $R$-module $N$ with support in $V(I+\mathrm{Ann}_R(M))$ and for any $i, j \ge 0$ we have the set
$$\{\frak p\in \mathrm{Ass}_R(\mathrm{Ext}^j_R(N,H^i_I(M)))\,\,:\,\,\dim(R/\frak p)\geq n-1\}$$ is finite.
\end{corollary}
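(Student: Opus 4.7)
The plan mirrors the argument used in Theorem \ref{T3.2}, with the ideal $J := I + \mathrm{Ann}_R(M)$ playing the role of $I$. Suppose for contradiction that the displayed set is infinite. Any $\frak p$ in that set lies in $\mathrm{Supp}_R(N) \subseteq V(J)$, hence contains $J$. The hypothesis $\dim(M/IM) = n$ reads $\dim R/J = n$, and the elementary inequality $\mathrm{ht}_R(\frak p/J) + \dim R/\frak p \le \dim R/J$ together with $\dim R/\frak p \ge n - 1$ forces $\mathrm{ht}_R(\frak p/J) \le 1$. Since $J$ has only finitely many minimal primes, I can extract from the set a countable subfamily $\{\frak p_k\}_{k \ge 1}$ of distinct primes all satisfying $\mathrm{ht}(\frak p_k/J) = 1$ and hence $\dim R/\frak p_k = n - 1$; such a family is automatically an antichain.

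Apply Lemma \ref{L2.2} to $\{\frak p_k\}_{k \ge 1}$ to obtain the flat extension $T$ of $R$ with $\mathrm{Max}(T) = \{\frak p_k T\}_{k \ge 1}$. Flat base change together with \cite[Theorem 23.2]{M86}, exactly as in the proof of Theorem \ref{T3.2}, yields the isomorphism
\[\mathrm{Ext}^j_R(N, H^i_I(M)) \otimes_R T \cong \mathrm{Ext}^j_T(N \otimes_R T, H^i_{IT}(M \otimes_R T)),\]
with every $\frak p_k T$ appearing as an associated prime of the right-hand side over $T$.

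Because $\mathrm{Ann}_R(M)$ kills $M$, local cohomology can be computed over $\bar{R} = R/\mathrm{Ann}_R(M)$, where $I$ and $J$ have the same image; hence $H^i_I(M) = H^i_J(M)$, and the same identification holds after base change, giving $H^i_{IT}(M \otimes T) = H^i_{JT}(M \otimes T)$. The crucial dimension estimate is $\dim T/JT \le 1$: for each $k$,
\[\mathrm{ht}_T(\frak p_k T/JT) \le \mathrm{ht}_{R[[X]]}(\frak p_k R[[X]]/JR[[X]]) = \mathrm{ht}_R(\frak p_k/J) \le 1,\]
and since every prime of $T$ sits below some maximal ideal $\frak p_k T$, every prime of $T$ containing $JT$ has coheight at most one. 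Lemma \ref{L3.1}, applied to the triple $(T, JT, M \otimes T)$, then shows that $H^i_{JT}(M \otimes T)$ is $JT$-cofinite.

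The last step is to upgrade $JT$-cofiniteness to finite generation of the relevant Ext module. Since $N \otimes T$ is a finitely generated $T$-module with support in $V(JT)$, it is annihilated by some power $(JT)^t$; using the filtration $(JT)^{t-1}(N \otimes T) \subseteq \cdots \subseteq N \otimes T$ whose successive quotients are finitely generated $T/JT$-modules, the long exact sequences of Ext combined with the $JT$-cofiniteness of $H^i_{JT}(M \otimes T)$ imply that $\mathrm{Ext}^j_T(N \otimes T, H^i_{IT}(M \otimes T))$ is finitely generated over $T$. Its set of associated primes is therefore finite, contradicting the presence of infinitely many $\frak p_k T$. The main obstacle is this final cofiniteness-to-finite-generation upgrade (with the parallel dimension computation $\dim T/JT \le 1$), which is the essential extension of the proof of Theorem \ref{T3.2} required to handle an arbitrary finitely generated $N$ with support in $V(J)$ in place of $R/I$.
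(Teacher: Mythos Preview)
Your proof is correct and follows essentially the same strategy as the paper: replace $I$ by an ideal $J$ with $V(J)=V(I+\mathrm{Ann}_R(M))$ so that $H^i_I(M)\cong H^i_J(M)$ and $\dim R/J=n$, and then rerun the argument of Theorem \ref{T3.2}. For the final ``cofiniteness $\Rightarrow$ $\mathrm{Ext}^j$ finitely generated'' step the paper simply quotes \cite[Lemma 1]{Ka96}, whose proof is precisely the filtration argument you wrote out.
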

\begin{proof} Let $J = \mathrm{Ann}(M/IM)$. Then, we have $V(J) = V(I+\mathrm{Ann}_R(M))$. It is not difficult to see that $H_I^i(M) \cong H_J^i(M)$ for all $i \ge 0$. We can assume henceforth that $I = \mathrm{Ann}(M/IM)$ and $\dim R/I = n$. Notice that if $K$ is an $I$-cofinite module, then $\mathrm{Ext}^j_R(N, K)$ is finitely generated for all finitely generated $R$-module $N$ with support $V(I)$ (see \cite[Lemma 1]{Ka96}). Now the proof is the same as Theorem \ref{T3.2}.
\end{proof}
\begin{corollary}
Let $R$ be a Noetherian ring, $I$ an ideal of $R$ and $n\geq 1$ be an integer and $M$ be a finitely generated $R$-module such that $\dim(M/IM)=n$.  Then for any finitely generated $R$-module $N$ with support in $V(I+\mathrm{Ann}_R(M))$ and for any $i, j \ge 0$ we have the set
$$\{\frak p\in \mathrm{Ass}_R({\rm Tor}^R_j(N,H^i_I(M)))\,\,:\,\,\dim(R/\frak p)\geq n-1\}$$ is finite.
\end{corollary}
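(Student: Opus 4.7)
The plan is to follow the template of the preceding corollary, replacing $\mathrm{Ext}$ by $\mathrm{Tor}$ throughout. As a preliminary reduction, set $J = \mathrm{Ann}_R(M/IM)$; then $V(J) = V(I + \mathrm{Ann}_R(M))$ and $H_I^i(M) \cong H_J^i(M)$ for all $i \ge 0$, so I may replace $I$ by $J$ and assume $I = \mathrm{Ann}_R(M/IM)$ and $\dim R/I = n$. Since $\mathrm{Supp}(N) \subseteq V(I)$ and $\mathrm{Supp}(H_I^i(M)) \subseteq V(I)$, every $\frak p \in \mathrm{Ass}_R \mathrm{Tor}_j^R(N, H_I^i(M))$ contains $I$; if in addition $\dim R/\frak p \ge n-1$, then $\mathrm{ht}(\frak p/I) \le 1$.

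Suppose for contradiction that the displayed set is infinite. Extract a countable subfamily $\{\frak p_k\}_{k \ge 1}$ from it. Because $\dim R/\frak p_k \ge n-1$ for every $k$, no proper containment $\frak p_k \subsetneq \frak p_\ell$ can occur (such a containment would force $\dim R/\frak p_\ell \le \dim R/\frak p_k - 1 \le n-2$). Lemma \ref{L2.2} then yields a flat extension $R \to T$ with $\mathrm{Max}(T) = \{\frak p_k T\}_{k \ge 1}$; in particular $\dim T/IT \le 1$. The flat base change formula (\cite[Theorem 4.3.2]{BS98}) applied to the derived functor identification gives
$$\mathrm{Tor}_j^R(N, H_I^i(M)) \otimes_R T \;\cong\; \mathrm{Tor}_j^T(N \otimes_R T,\, H_{IT}^i(M \otimes_R T)),$$
and \cite[Theorem 23.2]{M86} places each $\frak p_k T$ among the associated primes of the right-hand side over $T$.

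By Lemma \ref{L3.1}, the module $H_{IT}^i(M \otimes_R T)$ is $IT$-cofinite. To close the argument I need the $\mathrm{Tor}$ analogue of \cite[Lemma 1]{Ka96}: for any $IT$-cofinite $T$-module $K$ and any finitely generated $T$-module $N'$ with $\mathrm{Supp}(N') \subseteq V(IT)$, the set $\mathrm{Ass}_T \mathrm{Tor}_j^T(N', K)$ is finite. This is the main obstacle, but it is within reach: pick a resolution $F_\bullet \to N'$ by finitely generated free $T$-modules, so that each $F_k \otimes_T K$ is a finite direct sum of copies of $K$, hence $IT$-cofinite; since the class of $IT$-cofinite modules forms a Serre subcategory when $\dim T/IT \le 1$ (Melkersson), the homology $\mathrm{Tor}_j^T(N', K)$ is itself $IT$-cofinite, and any $IT$-cofinite $T$-module $L$ has $\mathrm{Hom}_T(T/IT, L)$ finitely generated, hence only finitely many associated primes. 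Applying this with $N' = N \otimes_R T$ and $K = H_{IT}^i(M \otimes_R T)$ contradicts the presence of infinitely many $\frak p_k T$ in the associated primes of the Tor, and the proof is complete.
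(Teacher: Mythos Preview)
Your argument is correct and follows the same template as the paper: the reduction to $I=\mathrm{Ann}_R(M/IM)$, the extraction of a countable family, the passage to $T$ via Lemma~\ref{L2.2}, flat base change, and the contradiction through $IT$-cofiniteness are exactly what is intended. The paper's own proof is the single line ``Use \cite[Theorem 2.1]{Me05}'' (with the set-up inherited from the preceding corollary); that result of Melkersson gives directly that $\mathrm{Tor}^T_j(N',K)$ is finitely generated whenever $K$ is $IT$-cofinite and $N'$ is finitely generated with support in $V(IT)$, so your Serre-subcategory detour is a correct but slightly longer alternative to a one-line citation.

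One small slip: in your no-containment step you conclude $\dim R/\frak p_k-1\le n-2$, which needs $\dim R/\frak p_k\le n-1$, whereas you only know $\dim R/\frak p_k\ge n-1$. The primes with $\dim R/\frak p=n$ are minimal over $I$ and hence finite in number, so simply discard them first and take all $\frak p_k$ with $\dim R/\frak p_k=n-1$; then your containment argument goes through verbatim.
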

\begin{proof} Use \cite[Theorem 2.1]{Me05}.
\end{proof}
We prove the second main result of this section.
\begin{theorem}\label{T3.6}
Let $R$ be a Noetherian ring, $I$ an ideal of $R$ and $M$ an (not necessarily finitely generated) $R$-module. Then for any integer $t\geq 0$, the set $$\mathcal{S}:=\{ \frak p\in \mathrm{Ass}_RH^t_I(M)\,\,:\,\, \mathrm{ht}(\frak p)=t\} = \{ \frak p \in \mathrm{Supp}(H^t_I(M))\,\,:\,\,\mathrm{ht}(\frak p)=t\}$$ is finite.
\end{theorem}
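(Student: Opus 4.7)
My plan is to split the argument into two parts: first prove that the two sets coincide via Grothendieck's vanishing theorem, and then establish finiteness by contradiction using Lemma~\ref{L2.2}.

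For the equality of the two sets, the inclusion $\mathrm{Ass}_R H^t_I(M) \subseteq \mathrm{Supp}_R H^t_I(M)$ is automatic. Conversely, suppose $\frak p \in \mathrm{Supp}_R H^t_I(M)$ has $\mathrm{ht}(\frak p) = t$, and let $\frak q \subsetneq \frak p$ be any prime. Then $\dim R_{\frak q} < t$, so Grothendieck's vanishing theorem (valid for arbitrary modules over a Noetherian ring) gives $H^t_I(M)_{\frak q} = H^t_{IR_{\frak q}}(M_{\frak q}) = 0$, i.e.\ $\frak q \notin \mathrm{Supp}_R H^t_I(M)$. Thus $\frak p$ is minimal in $\mathrm{Supp}_R H^t_I(M)$, and minimal primes of the support of a module are always associated, so $\frak p \in \mathrm{Ass}_R H^t_I(M)$.

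For finiteness, suppose toward contradiction that $\mathcal{S}$ is infinite, and pick a countable family $\{\frak p_i\}_{i\ge 1}\subseteq\mathcal{S}$ of distinct primes. Since all of them have height $t$, there are no containment relations, and Lemma~\ref{L2.2} supplies a flat extension $R\to T$ with $\mathrm{Max}(T)=\{\frak p_i T\}_{i\ge 1}$, each $\frak p_i T$ of height $t$; in particular $\dim T = t$. By flat base change, $N:=H^t_{IT}(M\otimes_R T)\cong H^t_I(M)\otimes_R T$, and since $R_{\frak p_i}\to T_{\frak p_i T}$ is a faithfully flat local homomorphism, each $\frak p_i T$ lies in $\mathrm{Supp}_T N$. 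Repeating the Grothendieck-vanishing argument over $T$, each $\frak p_i T$ is a minimal prime of $\mathrm{Supp}_T N$ and therefore $\frak p_i T \in \mathrm{Ass}_T N$ for every $i$.

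The main obstacle is then to derive a contradiction from the fact that $N$ has infinitely many maximal ideals of $T$ among its associated primes. A useful structural observation is that every prime of $T$ of height $\dim T = t$ is automatically maximal, so $\mathrm{Supp}_T N \subseteq \mathrm{Max}(T)$; for every $x \in N$ the Noetherian quotient $T/\mathrm{Ann}_T(x)$ is therefore zero-dimensional and hence Artinian, which makes $V(\mathrm{Ann}_T(x))$ finite. This forces the orthogonal decomposition $N = \bigoplus_{i\ge 1}\Gamma_{\frak p_i T}(N)$ with each summand nonzero. The hardest step will be to turn this infinite decomposition into a contradiction, exploiting the special structure of $N$ as a top local cohomology over $T$ --- for instance by combining the countable prime avoidance available in $T$ with an $IT$-cofiniteness argument in the spirit of Lemma~\ref{L3.1}, so as to force the set of associated maximal ideals of $N$ to be finite after all, against the infinitude of $\{\frak p_i T\}_{i\ge 1}$.
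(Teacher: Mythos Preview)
Your argument for the equality $\mathcal{S}=\{\frak p\in\mathrm{Supp}\,H^t_I(M):\mathrm{ht}(\frak p)=t\}$ is correct and matches the paper's. The finiteness argument, however, has a genuine gap at the very end.

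The orthogonal decomposition $N=\bigoplus_{i\ge 1}\Gamma_{\frak p_iT}(N)$ is not a contradiction by itself: an arbitrary module is perfectly allowed to be an infinite direct sum of nonzero pieces. Your suggested way out, namely an $IT$-cofiniteness argument ``in the spirit of Lemma~\ref{L3.1}'', does not apply here, because that lemma requires the module to be finitely generated, and $M\otimes_RT$ need not be --- the hypothesis of the theorem explicitly allows $M$ to be arbitrary. So as written, the last paragraph is not a proof but a hope.

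What you are missing is the reduction to a finitely generated module. The paper first observes that for $\frak p$ of height $t$ one has $H^t_{IR_{\frak p}}(M_{\frak p})\cong H^t_{IR_{\frak p}}(R_{\frak p})\otimes_{R_{\frak p}}M_{\frak p}$ (top local cohomology over a $t$-dimensional local ring is right exact and commutes with direct limits), so $H^t_{IR_{\frak p}}(M_{\frak p})\neq 0$ forces $H^t_{IR_{\frak p}}(R_{\frak p})\neq 0$; hence $\mathcal{S}\subseteq\{\frak p\in\mathrm{Supp}\,H^t_I(R):\mathrm{ht}(\frak p)=t\}$, and it suffices to treat $M=R$. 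After passing to $T$ via Lemma~\ref{L2.2}, one then has $\dim T=t$ and infinitely many maximal ideals $\frak p_iT$ in $\mathrm{Ass}_T H^t_{IT}(T)$. But $H^t_{IT}(T)$ is the top local cohomology of the finitely generated $T$-module $T$ over a Noetherian ring of dimension $t$, and is therefore Artinian by \cite[Proposition~5.1]{Me05}; an Artinian module has only finitely many associated primes, and this is the contradiction. Without the reduction to $M=R$ (or some other finitely generated module) this Artinianness is unavailable, and your argument does not close.
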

\begin{proof}
It follows from Grothendieck's Vanishing Theorem, that each element of the set $\{ \frak p \in \mathrm{Supp}(H^t_I(M))\,\,:\,\,\mathrm{ht}(\frak p)=t\}$ is a minimal element of the set $\mathrm{Supp}(H^t_I(M))$ and so is an associated prime ideal of the $R$-module $H^t_I(M)$. Therefore
$$\{ \frak p \in \mathrm{Supp}(H^t_I(M))\,\,:\,\, \mathrm{ht}(\frak p)=t\} \subseteq \mathcal{S} \subseteq \{ \frak p \in \mathrm{Supp}(H^t_I(M))\,\,:\,\,\mathrm{ht}(\frak p)=t\}.$$
Hence $$\mathcal{S}=\{ \frak p \in \mathrm{Supp}(H^t_I(M))\,\,:\,\,\mathrm{ht}(\frak p)=t\}.$$
Let $\frak p $ be an arbitrary element of $\{ \frak p \in \mathrm{Supp}(H^t_I(M))\,\,:\,\,\mathrm{ht}(\frak p)=t\}$ we have $H^t_{IR_{\frak p}}(M_{\frak p}) \neq 0$. Notice that $\dim R_{\frak p} = t$ so by \cite[Exercise 6.1.9]{BS98} we have $H^t_{IR_{\frak p}}(M_{\frak p}) = H^t_{IR_{\frak p}}(R_{\frak p}) \otimes_{R_{\frak p}}M_{\frak p}$. Hence $H^t_{IR_{\frak p}}(R_{\frak p}) \neq 0$. Thus for any $R$-module $M$ we have
  $$\{ \frak p \in \mathrm{Supp}(H^t_I(M))\,\,:\,\,\mathrm{ht}(\frak p)=t\} \subseteq \{ \frak p \in \mathrm{Supp}(H^t_I(R))\,\,:\,\,\mathrm{ht}(\frak p)=t\}.$$
So it is enough to prove the assertion in the case $M = R$. Suppose that $\{ \frak p\in \mathrm{Ass}_RH^t_I(R)\,\,:\,\, \mathrm{ht}(\frak p)=t\}$ is not finite. Then, we can choose a countable infinite subset
$$\{\frak p_i\}_{i\ge 1} \subseteq \{ \frak p \in \mathrm{Ass}_RH^t_I(R)\,\,:\,\, \mathrm{ht}(\frak p) = t\}.$$
Now set $T$ as in Lemma \ref{L2.2}. Then we have $R\rightarrow T$ is a flat extension and $\mathrm{ Max}(T)=\{\frak p_iT\}_{i\ge 1}$. In particular, $T$ is a Noetherian ring of dimension $t$ and $\frak p_iT \in \mathrm{Ass}_TH^t_{IT}(T)$ for all $i \ge 1$. But, in view of \cite[Proposition 5.1]{Me05}, the $T$-module $H^t_{IT}(T)$ is Artinian and hence has finitely many associated primes, which is a contradiction.
The proof is complete.

\end{proof}
Let $T$ be a subset of $\mathrm{Spec}(R)$. We denote
$$T_i = \{\frak p \in T \,:\, \mathrm{ht}(\frak p) = i\}.$$
The following is a direct consequence of Theorem \ref{T3.6}.
\begin{corollary}\label{C3.7}
Let $M$ be an $R$-module of finite dimension and $I$ an ideal of $R$. Then the set
$$\bigcup_{i \ge 0} (\mathrm{Ass}_RH^i_I(M))_i$$
is finite.
\end{corollary}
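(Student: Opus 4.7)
The plan is to reduce the statement to Theorem~\ref{T3.6} via Grothendieck's vanishing theorem. First, I would recall that for any $R$-module $M$ of finite dimension, Grothendieck's vanishing theorem (see \cite[Theorem~6.1.2]{BS98}) gives $H^i_I(M) = 0$ whenever $i > \dim M$. Consequently, $\mathrm{Ass}_R H^i_I(M) = \emptyset$ for all such $i$, so the union
\[
\bigcup_{i \ge 0} (\mathrm{Ass}_R H^i_I(M))_i
\]
collapses to the finite union over the range $0 \le i \le \dim M$.

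Next, for each such $i$ I would apply Theorem~\ref{T3.6} with $t = i$. Since that theorem is explicitly stated for arbitrary (not necessarily finitely generated) $R$-modules, no additional hypothesis on $M$ is needed. It yields that
\[
(\mathrm{Ass}_R H^i_I(M))_i = \{\frak p \in \mathrm{Ass}_R H^i_I(M) : \mathrm{ht}(\frak p) = i\}
\]
is finite. A finite union of finite sets is finite, which completes the proof.

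There is essentially no obstacle here: the corollary is a direct packaging of Theorem~\ref{T3.6} together with the standard Grothendieck vanishing bound $H^i_I(M)=0$ for $i>\dim M$. The only point that deserves a brief verification is that Theorem~\ref{T3.6} genuinely applies to an arbitrary $R$-module $M$; this was built into its statement, so the reduction is immediate and the argument takes only a few lines.
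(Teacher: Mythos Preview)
Your proposal is correct and matches the paper's approach: the paper states the corollary as a direct consequence of Theorem~\ref{T3.6} without further argument, and your reasoning---Grothendieck vanishing to truncate the union at $i=\dim M$, then apply Theorem~\ref{T3.6} to each remaining index---is exactly how one unpacks that implication.
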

We close this paper with the following remark.
\begin{remark}\rm It is not known whether the set of minimal associated primes of a local cohomology module is finite. It is equivalent to the question: Is the support of local cohomology closed (see \cite{HKM09})? By Lemma \ref{L2.2} one can assume that the set of minimal associated primes of $H^i_I(M)$ is just $\mathrm{Max}(R)$.
\end{remark}

{\bf Acknowledgements:} The second author is grateful to Mahdi Majidi-Zolbanin for his valuable discussions about Section 2 on \verb"Mathoverflow.net".

\textsc{Faculty of Mathematical Sciences, Department of Mathematics, University of Mohaghegh
Ardabili, 56199-11367, Ardabil, Iran}\\
{\it E-mail address}: bahmanpour.k@gmail.com\\
\textsc{Department of Mathematics, FPT University, 8 Ton That Thuyet Road, Ha Noi, Viet Nam}\\
 {\it E-mail address}: quyph@fpt.edu.vn\\

\end{document}